\definecolor{orange}{rgb}{1,0.5,0}
\newtheorem{thm}{Theorem}
\theoremstyle{definition}
\newcommand{\E}{{\rm E}}
\newcommand{\R}{\mathbb{R}}
\title{Curvature from Graph Colorings}
\author{Oliver Knill}
\date{October 5, 2014}
\address{
        Department of Mathematics \\
        Harvard University \\
        Cambridge, MA, 02138
        }
\subjclass{Primary: 05C15, 05C10, 53C65, Secondary: 58J20, 05C31, 05C30 }
\keywords{Curvature, Graph coloring, Integral geometry, Euler characteristic }
\begin{document}
\maketitle

\begin{abstract}
Given a finite simple graph $G=(V,E)$ with chromatic number $c$ and chromatic polynomial $C(x)$.
Every vertex graph coloring $f$ of $G$ defines an index $i_f(x)$ satisfying the Poincar\'e-Hopf theorem 
\cite{poincarehopf} $\sum_x i_f(x)=\chi(G)$. As a variant to the index expectation result 
\cite{indexexpectation} we prove that $\E[i_f(x)]$ is equal to curvature $K(x)$ satisfying Gauss-Bonnet 
$\sum_x K(x) = \chi(G)$ \cite{cherngaussbonnet}, where
the expectation is the average over the finite probability space containing the $C(c)$ 
possible colorings with $c$ colors, for which each coloring has the same probability. 
\end{abstract}

\section{Introduction}

Differential geometric ideas were used in graph coloring problems early on.
Wernicke \cite{Wernicke} promoted already Gauss-Bonnet type ideas to the four color
problem and used what we today call curvature to graph theory.
The discharging method introduced by Heesch \cite{Heesch} 
was eventually used by Appel and Haken  \cite{AppelHaken}
to prove the four color theorem. This method is conceptionally related to geometric evolution methods: 
deform the geometry until a situation is reached which can be understood by classification.
Birkhoff \cite{BirkhoffPolynomial} and later Tutte attached polynomials to graphs 
which relate modern topological invariants like the Jones polynomial. Cayley already 
geometrized the problem by completing and modifying planar graphs to make them 
what we would call cubic so that a planar graph looks like discretization of a two 
dimensional sphere which by Gauss Bonnet has Euler characteristic $\sum K(x) = 2$ forcing the
existence of vertices with positive curvature. Studying the positive curvature parts is essential in the proof 
of the four color theorem for planar graphs \cite{FritschFritsch,Barnette,RobinWilson}. \\

Probabilistic methods in geometry are part of integral geometry as pioneered by mathematicians like
Crofton, 
Blaschke 
and Santal\'o 
\cite{Blaschke,Santalo,KlainRota,Nicoalescu}. 
This subject is known also under the name geometric probability theory
and has been used extensively in differential geometry. For example by Chern in the form of kinematic 
formulae \cite{Nicoalescu} or Milnor in proving total curvature estimates \cite{MilnorKnot,Spivak1999}
using the fact that curvature $K(x)$ of a space curve is an average of 
indices $i_f(x)$ (at minima of $f$ on the curve)  using a probability space of linear functions $f$. 
Banchoff used integral geometric methods in \cite{Banchoff67,Banchoff70} 
and got analogue results for polyhedra and surfaces. We have obtained similar results in 
\cite{indexexpectation} for graphs. Integral geometry is also used in other areas and is an applied topic:
the Cauchy-Crofton formula for example expresses the length of a curve as the average number of intersection 
with a random line. The inverse problem to reconstruct the curve from the number of 
intersections in each direction is a Radon transform problem and part of tomography, a concept which has
also been studied in the concept of graph theory already \cite{IntegralGeometryGraphs}.  \\

In this note we look at an integral geometric topic when coloring finite simple graphs $G=(V,E)$.
In full generality, the scalar curvature function (\ref{curvature})
satisfies the discrete Gauss-Bonnet formula $\sum_x K(x) = \chi(G)$. In \cite{cherngaussbonnet},
the focus had been on even dimensional geometric graphs for which $K(x)$ is a discretized Euler curvature,
a Pfaffian of the curvature tensor (\cite{Cycon} chapter 12).
For odd-dimensional geometric graphs, it was shown in \cite{indexexpectation} using integral geometric methods 
that the curvature is always constant zero \cite{indexformula}, a fact already pioneered in piecewise linear 
polytop situations \cite{Banchoff67}. 

\section{Index expectation}

Given a function $f:V \to \R$ on a vertex set $V$ of a finite simple graph, 
let $S^-_f(x)$ denote the subgraph generated by $\{ y \in S(x) \; | \; f(y)<f(x) \; \}$,
where $S(x) = \{ y \in V \; | \; (x,y) \in E \}$ generates a graph called 
the unit sphere of the vertex $x$.  In \cite{indexexpectation}, we averaged the index 
$$  i_f(x) = 1-\chi(S_f^-(x)) $$
over a probability space functions $f: V \to [-1,1]$ in such a way that for  a fixed vertex, 
the random variables $f(x)$ are all independent and uniformly distribution in $[-1,1]$. 
We showed that its expectation is the curvature
\begin{equation} \label{curvature}
 K(x) = \sum_{k=0}^{\infty} (-1)^k \frac{V_{k-1}(x)}{k+1} \; , 
\end{equation}
where $V_k(x)$ is the number of $K_{k+1}$ subgraphs in the sphere 
$S(x)$ at a vertex $x$ and $V_{-1}(x)=1$. \\

We know that 
$$  \sum_x i_f(x) = \chi(X) $$ 
if $f$ is injective. This is Poincar\'e-Hopf \cite{poincarehopf}. 
It can be proven by induction: add a vertex $x$ to a graph and extend $f$ so 
that the new point is the maximum of $f$. This increases the Euler characteristic by 
$\chi(B(x)) -\chi(S_f^-(x)) = 1-\chi(S_f^-(x))$ using 
the general formula $\chi(A \cup B) = \chi(A) + \chi(B) - \chi( A \cap B)$
used in the case where $A$ is the graph without the new point $B(x)$ 
is the unit ball of the new point $x$ and $A \cap B$ is $S^-_f(x)$.
The expectation of Poincar\'e-Hopf recovers then the Gauss-Bonnet theorem \cite{cherngaussbonnet}
$$ \sum_{x \in V} K(x) = \chi(G) \; , $$
where $\chi(G)=\sum_{k=0}^{\infty} (-1)^k v_k$ is the Euler characteristic
of the graph defined as the super count of  the number $v_k$ of $K_{k+1}$ subgraphs of $G$. 
Gauss Bonnet also immediately follows from the generalized handshaking lemma 
$$  \sum_x V_{k-1}(x) = (k+1) v_k \; . $$ 
While \cite{poincarehopf} established Poincar\'e-Hopf only for injective functions,
the same induction step also works for locally injective functions: assume it is true for all graphs with 
$n$ vertices and functions $f$ which are locally injective, take a graph with $n+1$ vertices and chose
a vertex which is a local maximum of $f$ at $x$ then remove this vertex together with the connections to $S^-(x)$.
Again this reduces the Euler characteristic by $1-\chi(S^-(x))$. \\

The integral geometric approach to Gauss-Bonnet illustrates an insight attributed in \cite{Nicoalescu} to 
Gelfand and his school: {\it the main trick of classical integral geometry is the ``change of order of summation"}.
In our case, one summation happens over the vertex set $V$ of the graph, the other integration is performed
over the set of functions $f$ with respect to some measure. 
What can be proven in a few lines for general finite simple graphs can also be done for
general compact Riemannian manifolds. The integral geometric approach provides intuition what Euler 
curvature is: it is an expectation of the quantized index values or divisors given as Poincar\'e-Hopf 
indices which incidentally play a pivotal role also in Baker-Norine theory \cite{BakerNorine2007} and
especially in higher dimensional versions of this Riemann-Roch theorem.
That also in the continuum, curvature is the expectation of 
index density is intuitive already in the continuum for manifolds $M$: negative index values are more likely to occur at places
with negative curvature; there is still much to explore however. We need to investigate in particular what happens if we take
the probability space $M$ itself and chose for every $x \in M$ and some fixed time $\tau$ the heat signature function
$f(y) = [e^{-\tau L}]_{xy}$, if $L=d d^* + d^* d$ is the Hodge Laplacian of the Riemannian manifold. 
The question is then how the expectation $K_{\tau}(y) = \E[i_f(y)]$ of the index 
density of these functions is related to the classical Euler curvature \cite{eveneuler}. This is interesting
because it would allow us to work with a small (finite dimensional) probability space. 
The smallness of the probability space leads us to the topic covered here. \\

We look at one of the smallest possible probability spaces for which an index averaging result can hold
for graphs. It turns out that we do not need injectivity of the functions $f$ at all. All we need is 
local injectivity. In other words, we can restrict the probability space of functions $f$ to {\bf colorings}. 
This reduces the size of the probability space considerably
similarly as in the continuum, the choice of linear functions reduced the probability space from an infinite
dimensional class of Morse functions to a finite dimensional space of linear functions induced onto the 
manifold from an ambient Euclidean space. 
While Poincar\'e-Hopf and the index averaging result hold also for all $n!$ injective functions, 
the probability space of $c$ colorings has considerably
less elements. If $C(x)$ is the chromatic polynomial of the graph introduced in 1912 by Birkhoff in the form of a
determinant \cite{BirkhoffPolynomial}, then the probability space has $C(c)$ elements.  \\

For the octahedron for example, a geometric two-dimensional graph $G$ with Euler characteristic $\chi(G)=2$ and
chromatic number $c(G)=3$, the probability space of colorings has $6$ elements only. We can see this in Figure~(\ref{fig5}).

\begin{figure}
\scalebox{0.45}{\includegraphics{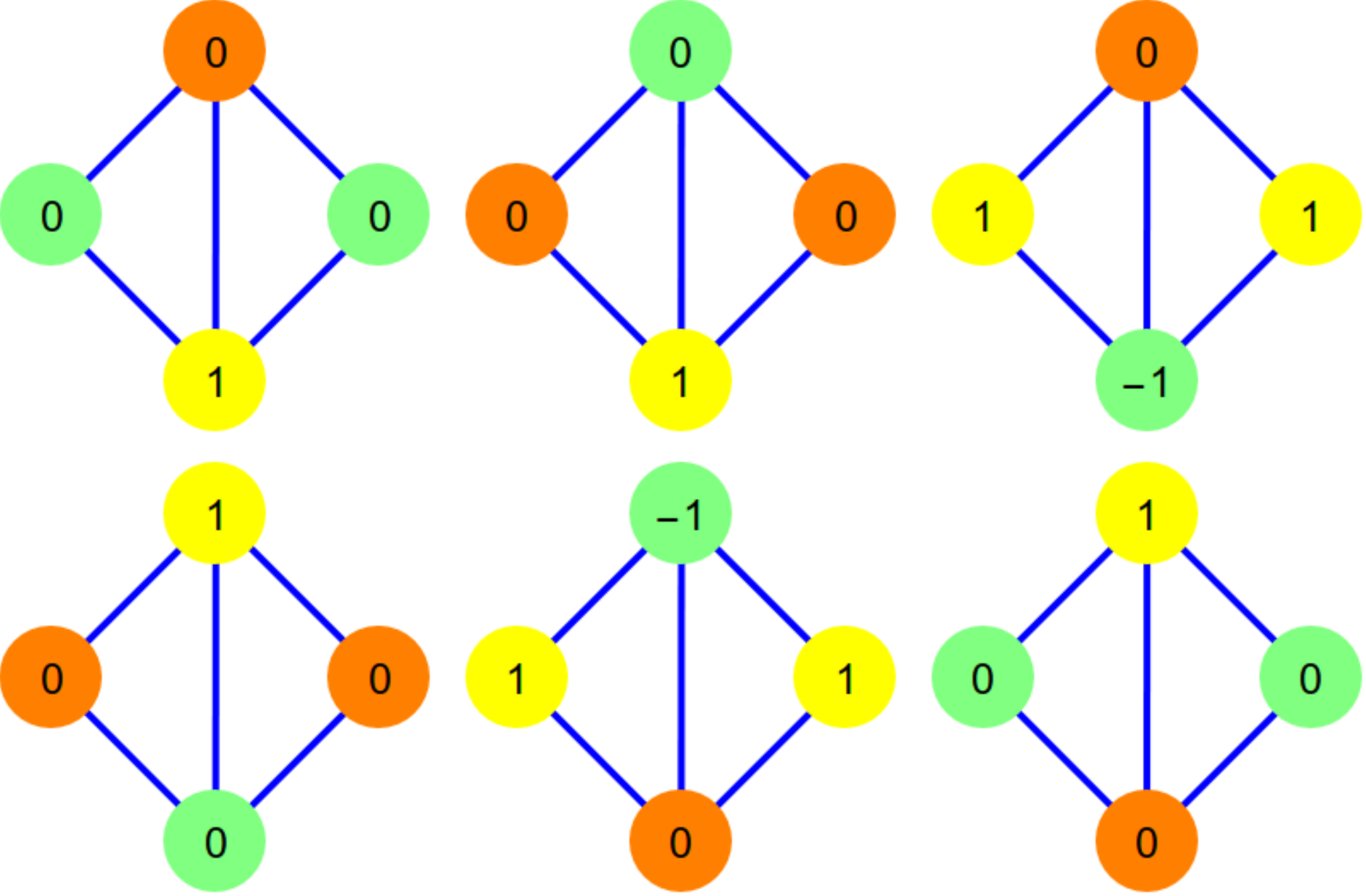}}
\caption{
We see all 3-colorings of the diamond graph. The index values are marked on the vertices.
The probability space has only $C(3)=6$ elements. 
The average index is the curvature $K = (1/6,1/3,1/6,1/3)$ of $G$. 
}
\end{figure}

\begin{figure}
\scalebox{0.45}{\includegraphics{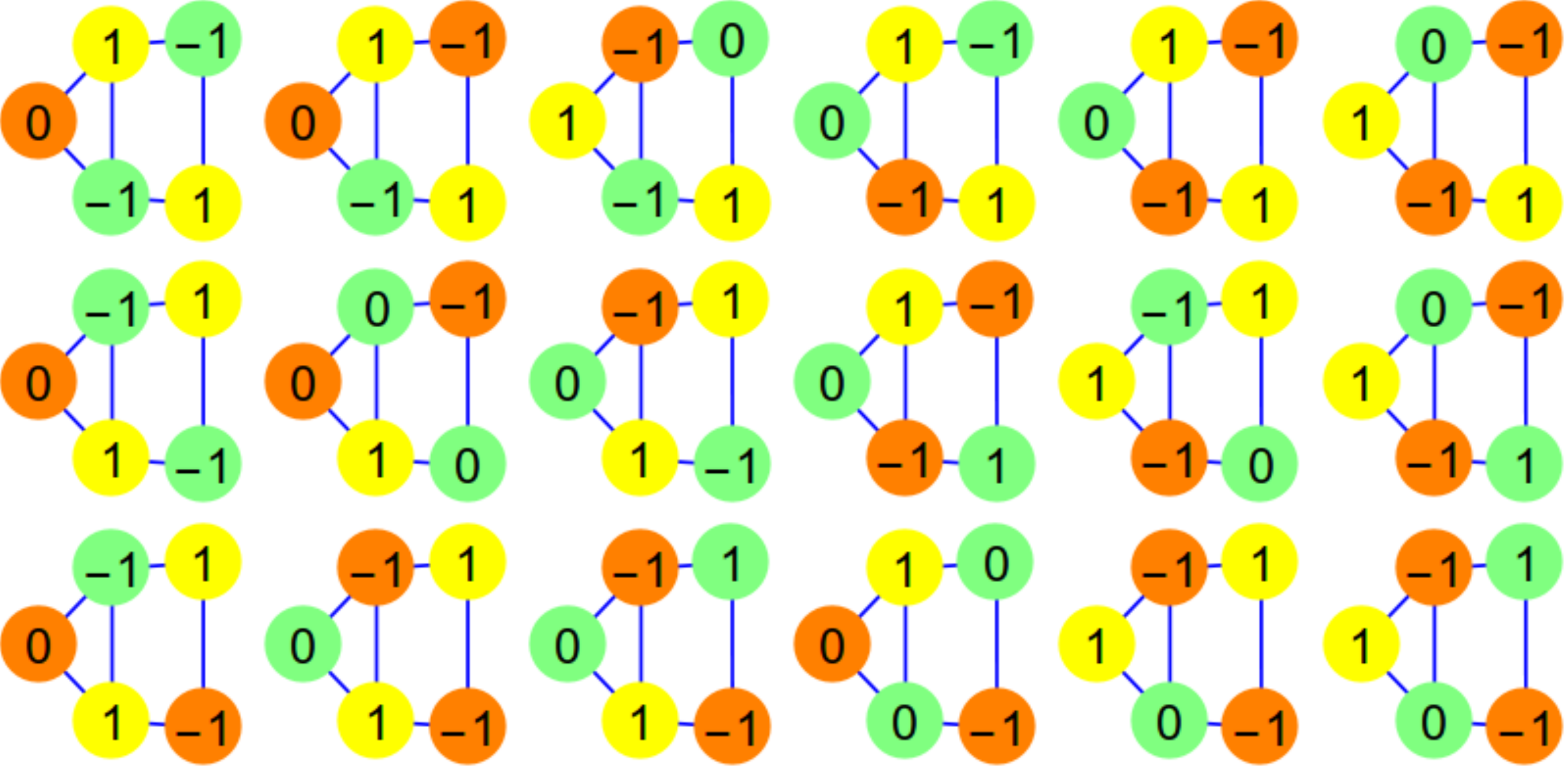}}
\caption{
All 3-colorings of the house graph and the index values. The probability space
has 18 elements. The average index is the curvature $K = (0,0,-1/6,-1/6,1/2)$
which adds up to zero. 
}
\end{figure}

\begin{figure}
\scalebox{0.45}{\includegraphics{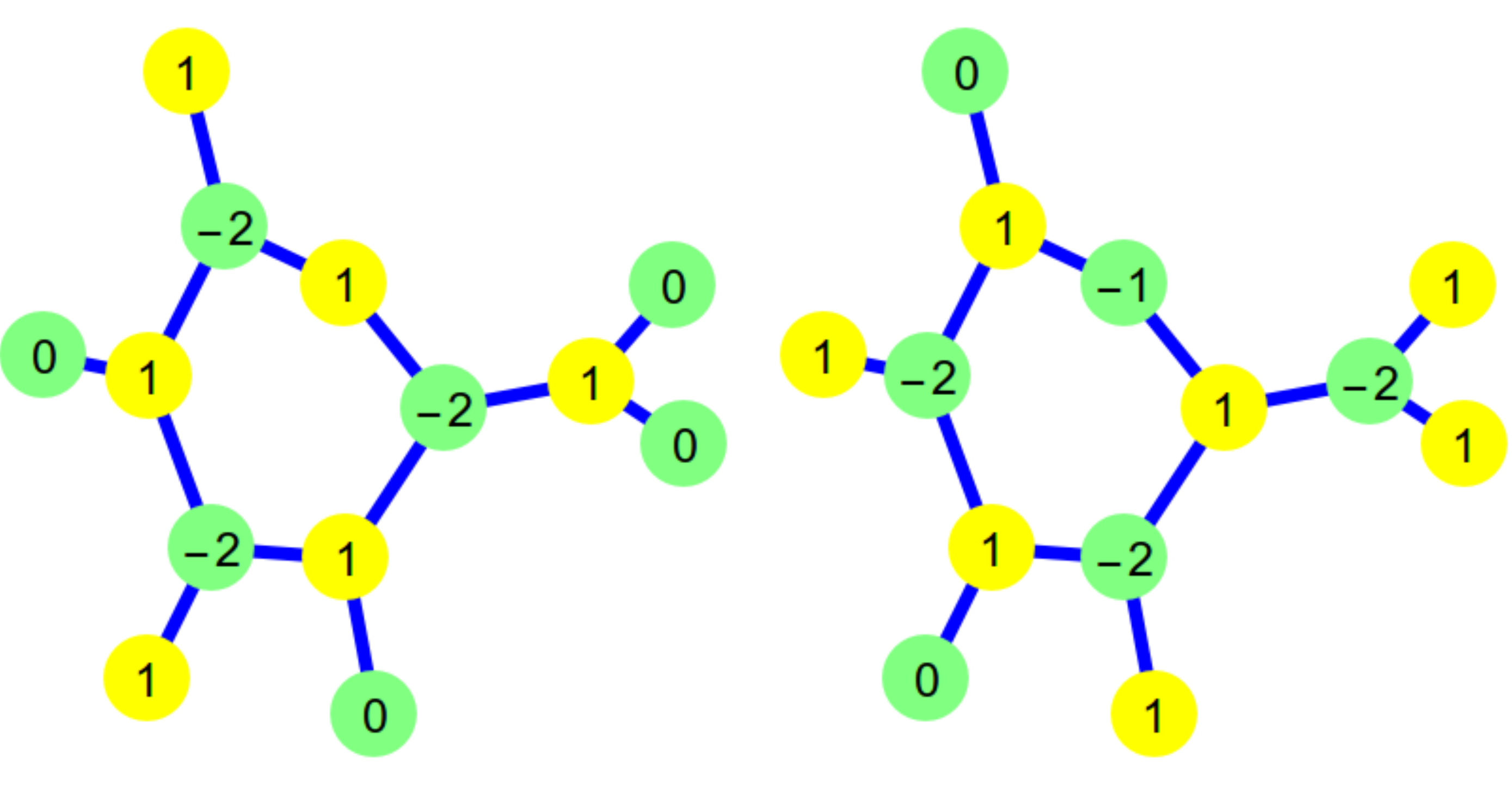}}
\caption{
The figure shows all 2-colorings of the Cytosine molecule graph. 
The probability space has only $C(2)=2$ elements. The average index is the curvature $K$.
It takes values $1/2, -1/2$ or $0$. The zero curvature appears at the nitrogen
atom which only bounds 2 carbon atoms. The hydrogen and oxygen atoms have
curvature $1/2$. The negative curvatures appear on carbon and hydrogen atoms
with 3 neighbors. 
}
\end{figure}

\begin{figure}
\scalebox{0.40}{\includegraphics{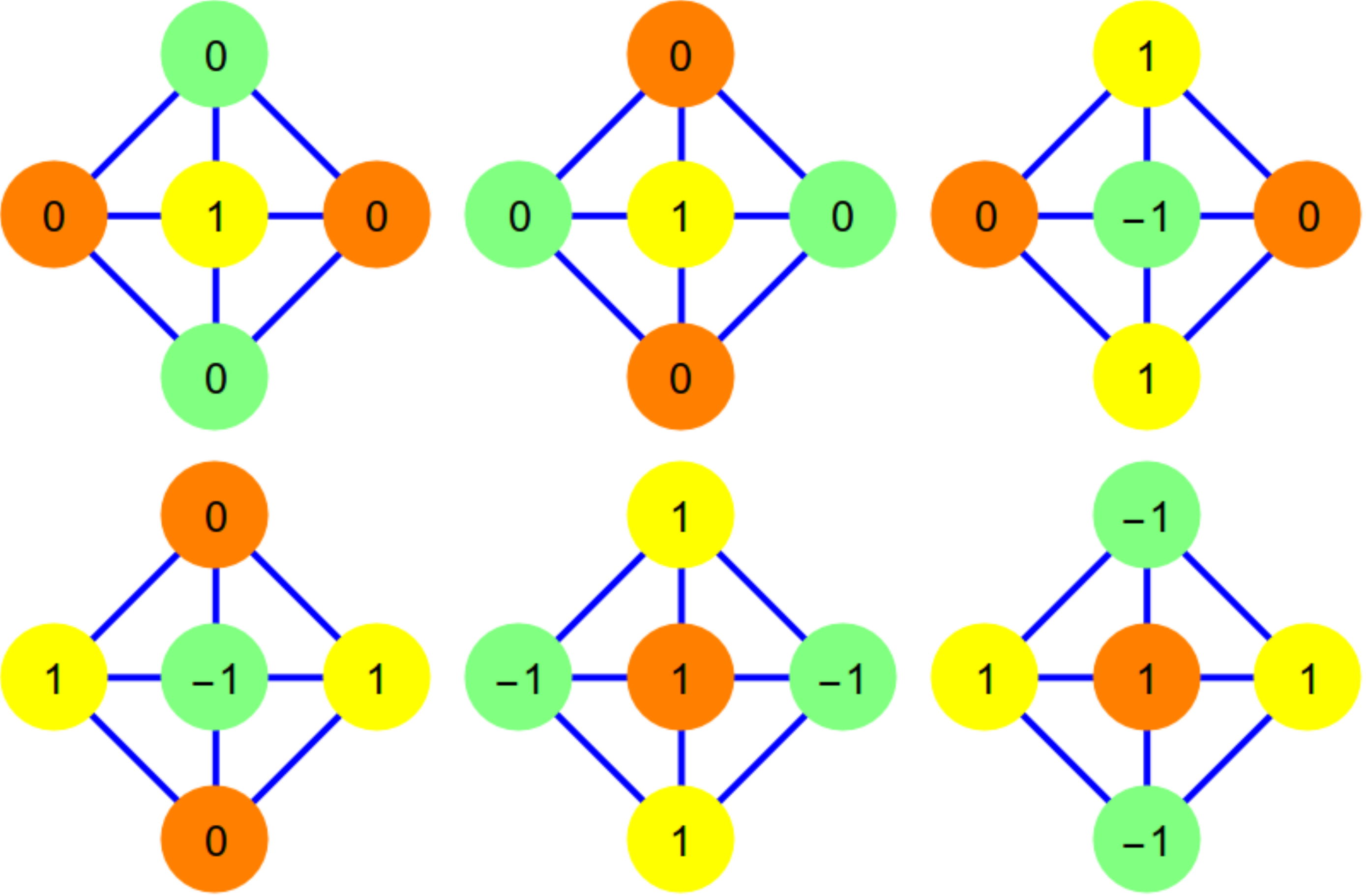}}
\caption{
All 3-colorings of the Boron Carbide molecule $CB_4$ graph which is just a wheel graph. 
The probability space has only $C(3)=6$ elements. The average index is the curvature 
$K$ which only takes values $1/3$ at the central carbon atom and $1/6$ on the 
peripheral boron atoms. 
}
\end{figure}

\begin{figure}
\scalebox{0.45}{\includegraphics{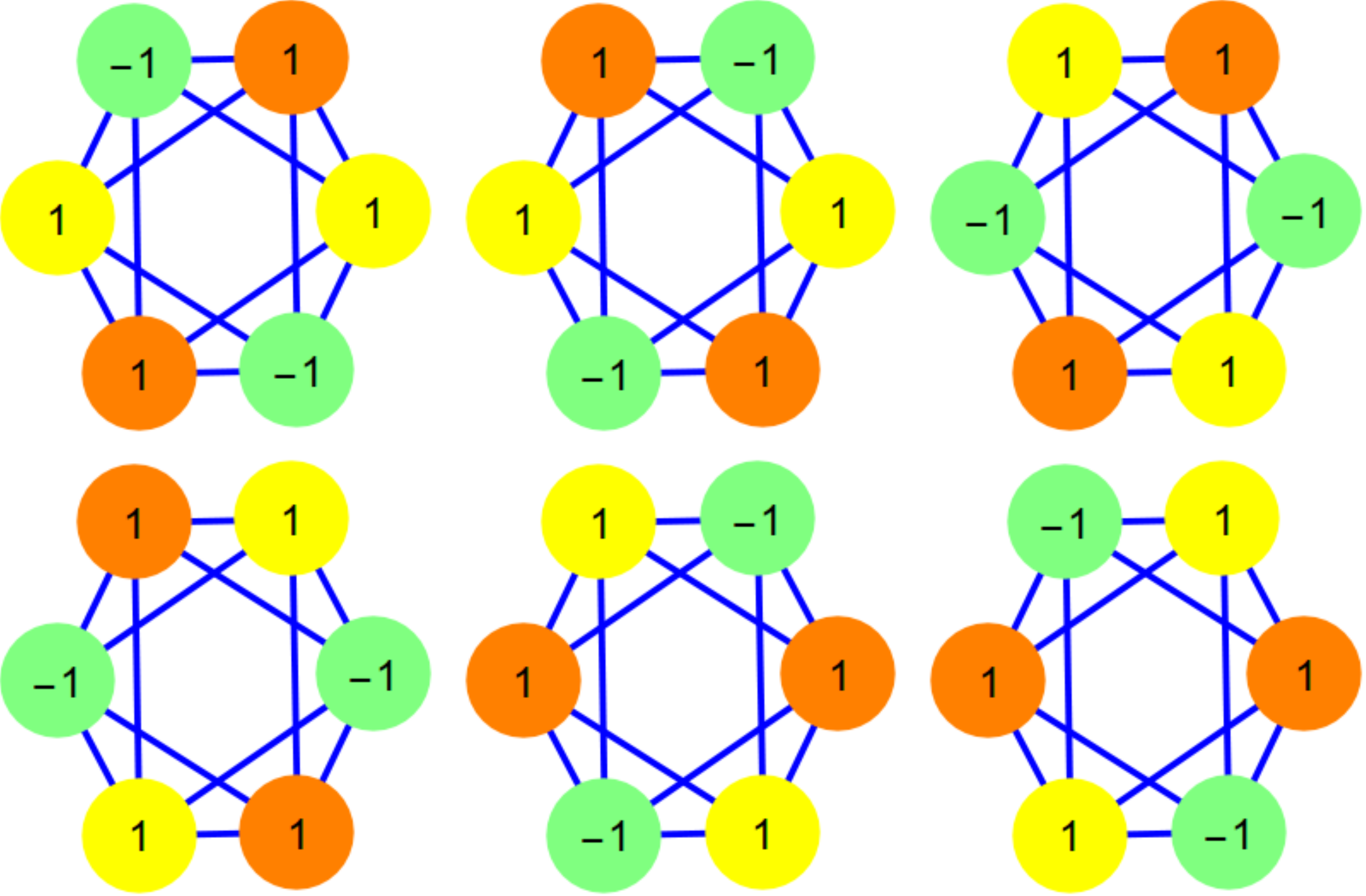}}
\caption{
All 3-colorings of the octahedron graph. 
The probability space has only $C(3)=6$ elements. The average index is the curvature
$K$ which takes the value $1/3$ on each vertex. The total curvature is $2$ by Gauss-Bonnet. 
}
\label{fig5}
\end{figure}

\begin{thm}[Color Index Expectation]
\label{mainresult}
Let $\Omega_c(G)$ denote the finite set of all graph colorings of $G$ with $c$ colors, 
where $c$ is bigger or equal than the chromatic number. Let $\E$ be the expectation
with respect to the uniform counting measure on $\Omega_c$. Then 
$$  \E[i_f(x)] = K(x) \; . $$
\end{thm}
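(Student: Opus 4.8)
The plan is to reduce the computation of the expectation to a purely local count on the unit sphere $S(x)$ and then to exploit the symmetry of the set of colorings under permutations of the color set. First I would expand the Euler characteristic $\chi(S^-_f(x))$ appearing in $i_f(x)=1-\chi(S^-_f(x))$ into its defining alternating sum over complete subgraphs: a $K_{k+1}$ inside $S^-_f(x)$ is precisely a $(k+1)$-clique $\sigma$ of $S(x)$ all of whose vertices $y$ satisfy $f(y)<f(x)$. Writing $\mathbf{1}_\sigma$ for the indicator of this event, this gives
\begin{equation*}
i_f(x) = 1 - \sum_{k=0}^{\infty} (-1)^k \sum_{\sigma \cong K_{k+1},\ \sigma\subseteq S(x)} \mathbf{1}_\sigma .
\end{equation*}
By linearity of expectation, $\E[i_f(x)] = 1 - \sum_k (-1)^k \sum_\sigma \Prob[\mathbf{1}_\sigma]$, so everything reduces to computing, for each clique $\sigma$ of size $k+1$ in $S(x)$, the probability that all of its vertices receive an $f$-value below $f(x)$.

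The key observation is that such a $\sigma$ together with $x$ forms a complete subgraph $K_{k+2}$ of $G$, since $x$ is adjacent to every vertex of $S(x)$ and the vertices of $\sigma$ are mutually adjacent. In any proper coloring $f$ the $k+2$ vertices of $\tau = \{x\}\cup\sigma$ therefore receive $k+2$ \emph{distinct} colors, so the relative order of the values of $f$ on $\tau$ is a genuine total order. The hard part, and the crux of the whole argument, is to show that this relative order is uniformly distributed over all $(k+2)!$ orderings, from which $\Prob[\mathbf{1}_\sigma] = \Prob[x \text{ is the maximum on } \tau] = 1/(k+2)$ follows at once. I would prove this by letting the symmetric group of the $c$ colors act on $\Omega_c(G)$ by $(\pi\cdot f)(v) = \pi(f(v))$; this action sends proper colorings to proper colorings and preserves the uniform counting measure. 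Conditioning on the unordered set $S$ of colors occurring on $\tau$, any two assignments of $S$ to the vertices of $\tau$ differ by a permutation of $S$, which extends to a color permutation fixing all other colors and hence exhibits a measure-preserving bijection between the colorings realizing the two assignments. Thus all $(k+2)!$ orderings are equally likely; note that $c \ge \chi(G) \ge \omega(G) \ge k+2$ guarantees that enough colors are available for this step.

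Substituting $\Prob[\mathbf{1}_\sigma] = 1/(k+2)$ and using that the number of $(k+1)$-cliques in $S(x)$ is $V_k(x)$ by definition, I would obtain
\begin{equation*}
\E[i_f(x)] = 1 - \sum_{k=0}^{\infty} (-1)^k \frac{V_k(x)}{k+2} = 1 - \frac{V_0(x)}{2} + \frac{V_1(x)}{3} - \frac{V_2(x)}{4} + \cdots .
\end{equation*}
Reindexing and recalling the convention $V_{-1}(x)=1$ rewrites this as $\sum_{k=0}^\infty (-1)^k V_{k-1}(x)/(k+1)$, which is exactly the curvature $K(x)$ of (\ref{curvature}). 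I expect the genuine obstacle to lie entirely in the symmetry step above: one must check carefully that the color-permutation action is a well-defined, measure-preserving bijection of $\Omega_c(G)$ and that it acts transitively on the orderings of each fixed clique, since the naive swap of two color values depends on the coloring and is not a single fixed map. Once exchangeability of the colors on each clique is established, the remaining work is the bookkeeping of the alternating sum and the reindexing, which are routine.
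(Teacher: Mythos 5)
Your proposal is correct and follows essentially the same route as the paper: expand $\chi(S^-_f(x))$ by linearity of expectation over the cliques of $S(x)$, use the action of color permutations on $\Omega_c(G)$ to show that a fixed $(k+1)$-clique $\sigma\subseteq S(x)$ lies in $S^-_f(x)$ with probability $1/(k+2)$ (the paper phrases this as $\E[V_k^-(x)]=V_k(x)/(k+2)$), and then reindex the alternating sum to get $K(x)$. Your write-up is in fact somewhat more careful than the paper's on the crucial symmetry step, since the paper only sketches the orbit argument (``for any $f$ in $A$ there are $k+1$ functions in the complement''), whereas you make the exchangeability precise by conditioning on the color set of the clique $\{x\}\cup\sigma$ and extending permutations of that set to global color permutations.
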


\begin{proof}
The proof mirrors the argument for continuous random variables \cite{indexexpectation}. 
It is simpler as we do have random variables with a discrete distribution.
Let $V_k$ denote the number of $k$-dimensional 
simplices in $S(x)$ and $V_k^-$ the number of $k$-dimensional simplices in $S^-_f(x)$. 
Given a vertex $x$, the event 
$$  A = \{ f \; | \; f(x)>f(y), \forall y \in V \; \} $$ 
has probability $1/(k+2)$.  
The reason is that the symmetric group of color permutations
acts as measure preserving automorphims on the probability space
of functions implying that for any $f$ which is in $A$ there are $k+1$ functions which 
are in the complement implying that $A$ has probability $1/(k+2)$. This implies  
$$   \E[V_k^-(x)] = \frac{V_k(x)}{(k+2)}  \; $$
which is the same identity we also had in with the continuum probability space.  
Therefore,
\begin{eqnarray*}
   \E[ 1-\chi(S^-(x)) ] &=& 1-\sum_{k=0}^{\infty}  (-1)^k \E[V_k^-(x)] 
                         = 1+\sum_{k=1}^{\infty} (-1)^k \E[V_{k-1}^-(x)]  \\
                        &=& 1+\sum_{k=1}^{\infty} (-1)^k \frac{V_{k-1}(x)}{(k+1)} 
                         =  \sum_{k=0}^{\infty} (-1)^k \frac{V_{k-1}(x)}{(k+1)} = K(x)   \; .
\end{eqnarray*}
\end{proof}

For trees for example where $c=2$, there are only $C(2)=2$ colorings. At the 
leaves of the tree, we have either the index $0$ or $1$, where minima $x$ always have 
the index value $i_f(x) = 1$. 
The average index is therefore $1/2$ at the leaves. On an interior vertex $x$ with 
index $d(x)$ we have either index $1-0$ if the vertex is a minimum or $1-d(x)$ if it is 
a maximum. The curvature there is then $(1+(1-d(x))/2=(2-d(x))/2$. More examples 
can be seen in the figures. 

\section{Open ends} 

A) We can think of a graph coloring with $c$ colors as a
gauge field using the finite cyclic group $Z_{c}$. Permuting colors is a symmetry or
gauge transformation. Given a coloring $f$ and a fix of direction for every edge, we can look at the gradient
$df = \nabla f$, which is now just realized as a finite $v_0 \times v_1$ matrix from the module $Z_c^{v_1}$
to $Z_c^{v_0}$. With the transpose $d^*$ which is a discrete divergence, the Laplacian $L=d^* d$ is independent
of the directions chosen on the edges. When looking at such colorings, we get a probability space
of gradient vector fields. If $c$ is prime, we could look at the eigenvalues of $L$ in an extension field
of the Galois field $Z_c$. Looking at the situation where $c$ is the chromatic number of $G$ is natural. One can 
now ask spectral inverse problems with respect to such ``color spectra" or look at random walk 
defined by the Laplacian $L$. Dynamical systems on finite group-valued gauge fields
are special cellular automata \cite{Wolfram86}.
Especially interesting could be to study the discrete Markoff process $f \to L^n f$, 
which is now a cellular automaton. \\

B) An interesting problem is to characterize graphs with minimal chromatic number in 
the sense that the {\bf chromatic richness} $C(c)/c!$ is equal to $1$. Graphs which satisfy 
$C(c)=c!$ can be colored in a unique way modulo color permutations. We call them {\bf chromatically poor graphs}. 
Examples are trees (with $c=2$), the octahedron (with $c=3)$, the complete graphs $K_n$ (with $c=n$), 
wheel graphs $W_{2n}$ with an even number of spikes (with $c=3$) or cyclic graphs $C_{2n}$ or more generally 
any bipartite graph with $c=2$.
Since for cyclic graphs $C_{2n+1}$, or wheel graphs $W_{2n+1}$ we have $C(c)/c!=(4^n-1)/3$, the richness 
of a graph is expected to be exponentially large in $v_0=|V|$ in general. We have done some statistics with the
richness function on Erd\"os-Renyi probability spaces of graphs. It appears that a limiting distribution 
could appear. But this is very preliminary as experiments with larger $n$ are costly. 
For simply connected geometric graphs, we only need satisfy a local
degree condition attributed to Heawood (see Theorem 2.5 in \cite{SaatyKainen} or Theorem 9.5 
in \cite{ChartrandZhang}).
Related is the conjecture of Steinberg from 1975 which claims that every planar graph 
without 4 and 5 cycles must be 3 colorable. More generally, Erd\"os asked for which $k$ the 
exclusion of cycles of length $\{4,\dots ,k\}$ renders the chromatic number $c\leq 3$. 
The record is $k=7$ \cite{BGRS}.  While graphs without cycles of length larger than $3$
have $c \leq 3$ already if two triangles are present, such a graph can not be chromatically 
poor any more.\\

C) What does the standard deviation of the random variable $f \to i_f(x)$ tell about the graph? 
More generally, what is the meaning of the moments $\E[i_f(x)^n]$? 
Like curvature $K(x)$, these are scalar functions on vertices.
The standard deviation is a scalar field which  depends on the number of colorings. 
For the wheel graph $C_4$ for example which has chromatic number $3$ and chromatic polynomial 
$C(x) = 14x - 31x^2 + 24x^3 - 8x^4 + x^5$, we measure the standard deviations \\
$\sigma(3)=(0.942,0.687,0.687,0.687,0.687)$, $C(3)=6$, \\
$\sigma(4)=(0.816,0.645,0.645,0.645,0.645)$, $C(4)=72$, \\
$\sigma(5)=(0.738,0.613,0.613,0.613,0.613)$, $C(5)=420$. \\

D) A general problem in differential geometry is to determine to which extent curvature
determines geometry \cite{BergerPanorama}. One can now ask for graphs, to what extend the moments 
$\sum_{x \in V} K(x)^k$ determine the graph up to isomorphism. With the index functions
$i_f(x)$ one has even more data and one can ask to which extent the sequence of numbers
$$ a_k =\frac{1}{C(c)} \sum_{f \in \Omega_c} \sum_{x \in X} i_f^k(x) \;  $$
determines the graph $G$ up to isomorphism.  Of course, we have  $a_1 = \chi(G)$ already by 
Poincar\'e-Hopf. Also interesting are the moments
$$ a_k(x) =\frac{1}{C(c)} \sum_{f \in \Omega_c} i_f^k(x) \;  $$
for which we just have established $a_1(x)=K(x)$. 
One can ask the inverse problem of finding the geometry from the moments 
$K_n(x)=\E[i_f(x)^n]$ in any geometric setup. While for plane curves, $K_1$ alone
determines the curve up to isometry, for space curves (where total curvature is nonnegative),
this is already no more the case. The analogue of total curvature would be $\E[|i_f(x)|]$ 
which can be expressed with finitely many of the above moments for any graph. 
Still, it is conceivable that the distribution of the moment functions $K_n(x)$ is sufficient
to reconstruct the graph up to isomorphism. \\

E) Index averaging allows to see that curvature $K(x)$ is identically zero for
odd-dimensional geometric graphs \cite{indexformula}, 
finite simple graphs for which the unit spheres are discrete spheres, geometric $(d-1)$ dimensional
graphs which become contractible after removing one vertex. The key insight from \cite{indexformula} is
that the symmetric index $j_f(x) = (i_f(x) + i_{-f}(x))/2$ is related to the Euler characteristic of a
geometric $(d-2)$-dimensional graph. Establishing $j_f(x)$ to be zero for all $x$ implies curvature $K(x)$ 
is zero.  The same holds for Riemannian manifolds $M$ where it leads to the insight that 
Euler characteristic $\chi(M)$ for even dimensional manifolds is an average of two dimensional 
sectional curvatures as $\chi(M)$ is also an average of $\chi(N)$ where $N$ 
runs over a probability space of two-dimensional submanifolds $N$ of $M$.  As in the case of graphs, a probability measure $P$ 
on the set of scalar fields provides us with with curvature and defines geometry.
In physics, one would select a measure $P$ which is invariant under the wave equation and
refer to the Krylov-Bogolyubov theorem for existence.
The same can be done using the finite probability space considered here:
any coloring $f$ of a four dimensional geometric graph defines a two dimensional geometric graph. 
The finite probability space of graph colorings allows computations with rather small number of elements.
The natural measure on functions is the product measure. \\

F) Integral geometry naturally bridges the discrete
with the continuum. This was demonstrated early on in the proof of the F\'ary-Milnor theorem
\cite{MilnorKnot}, where a knot was approximated with polygons for which curvature is located on a finite set.
Similarly, if a compact smooth Riemannian manifold $M$ is embedded in $E=R^d$ and discretized by
a finite graph $G$, then the Euler curvature of the manifold can be obtained as the expectation of indices $i_f(x)$ where
$f$ is in a finite dimensional probability space of linear functions in the ambient space equipped with a probability
measure which is rotationally symmetric. The same probability space of functions $f$ produces random index functions
$i_f(x)$ on $G$. The expectation $K(x) = \E[ i_f(x)]$ is then believed to converge
to the classical curvature known in differential geometry. Index averaging becomes so a
``Monte Carlo" method for curvature and choosing different probability spaces allows to deform the geometry.
The measure can be obtained readily without any tensor analysis:
assume for example that the Riemannian manifold $M$ is diffeomorphic to a sphere $M_0= \phi(M)$
and Nash embedded in an Euclidean space, we get from the linear functions in the ambient space a probability measure $P$
on scalar functions of $M$. Taking the expectation of the index $i_f(x)$ gives curvature. 
Now push forward the measure $P$ to the function space $C^{\infty}(M_0)$ on $M_0$ to get a measure $\phi^* P$ on the 
set of all scalar functions on $M_0$. Averaging the index function $i_f(x)$ with respect to that measure gives the curvature function on the
standard sphere in such a way that $\phi: M \to M_0$ is an isometry - at least in the sense that curvature agrees.
We don't know yet that the Riemannian metric can be recovered from $\phi^* P$ but we believe it to be
possible since the probability measure $P$ gives more than just the expectation: it provides moments or
correlations and enough constraints to force the Riemannian metric.
This is an important question because if we would be able to measure distances from the measure $\phi^* P$,
geometry could be done by studying probability measures on the space of wave functions. 
Classical Riemannian manifolds are then part of a larger space of geometries in which
curvature $K$ is a distribution, but Gauss-Bonnet
still holds trivially as it is just the expectation of Poincar\'e-Hopf with respect to a measure. 
Any sequence of Riemannian metrics on a space $M$ had now an accumulation 
point in this bigger arena of geometries, as unlike the space of Riemannian metrics,
the extended space of measures is compact.

\begin{figure}
\parbox{15.2cm}{\scalebox{0.36}{\includegraphics{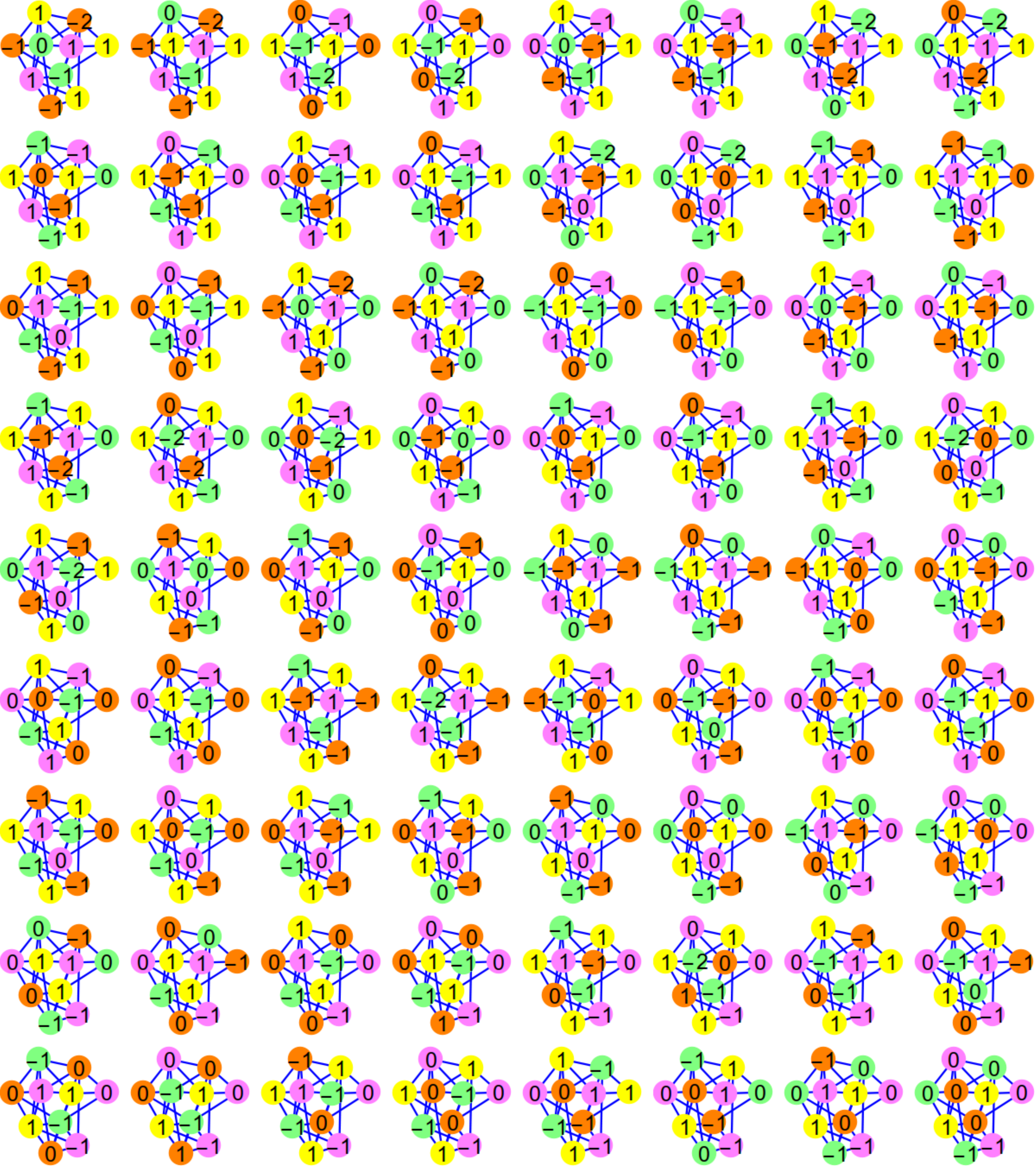}}}
\caption{
The figure shows a random graph with chromatic number $c=4$ and Euler characteristic $\chi=0$
and dimension $568/225=2.52444$.
The curvature $(-1/4, -1/4, 1/4, 1/6, -5/12, 1/12, 0, 1/6, 1/6, 1/12)$ is the
average over all 72 color index functions $i_f$ like for example
$i_f = \{ 1, -1, -1, 1, -2, -1, 1, 0, 1, 1 \}$
which has index $i_f(x)=-2$ at the vertex $x$ with minimal curvature $-5/12$. 
As the probability space has $C(c)=72$ elements, the 
color richness of this graph is $72/4! = 3$. 
}
\end{figure}

\vspace{12pt}
\bibliographystyle{plain}

\end{document}